\documentclass[a4paper,12pt,english]{article}
\NeedsTeXFormat{LaTeX2e}
\usepackage[cp1251]{inputenc}
\usepackage[T2A]{fontenc}
\usepackage[english]{babel}
\usepackage[dvips]{graphicx}

\usepackage{amsmath}
\usepackage{amssymb}
\usepackage{amsxtra}
\usepackage{amsthm}
\usepackage{latexsym}

\usepackage{geometry}
\geometry{top=2cm}
\geometry{bottom=25mm}
\geometry{left=3cm}
\geometry{right=2cm}

\def\mbf{\mathbf{f}}
\def\mbA{\mathbf{A}}
\def\bo{{\boldsymbol \omega}}
\def\bde{{\boldsymbol \varepsilon}}
\def\grad{\mathop{\rm grad }\nolimits}
\def\oq{\omega}
\def\Oq{\Omega}
\def\aq{\alpha}
\def\tq{\theta}
\def\Lqq{\Lambda}
\def\rmd{{\rm d}}
\def\rmi{{\rm i}}
\def\vk{\varkappa}
\def\sg{\sigma}
\def\prt{\partial}
\def\ds{\displaystyle}
\def\bR{\mathbf{R}}

\newcommand{\bb}[1]{\langle #1 \rangle}
\newcommand{\tl}[1]{{\tilde #1}}
\newcommand{\mysec}[1]{\vspace{5mm}\addtocounter{section}{1}{\bf #1.}\setcounter{equation}{0}}

\newtheorem{theorem}{Theorem}

\newtheorem{lemma}{Lemma}

\numberwithin{equation}{section}

\begin{document}

\title{Symmetry in systems with gyroscopic forces\footnote{Submitted on September 21, 1981.}}

\author{M.P.\,Kharlamov\footnote{Donetsk State University.}}

\date{}

\maketitle

\begin{center}
{\bf Published: \ \ \textit{Mekh. Tverd. Tela}, 1983, No. 15, pp. 87--93}\footnote{Russian Journal ``Mechanics of Rigid Body''.}

\vspace{3mm}

\href{http://www.ams.org/mathscinet-getitem?mr=0698564}{http://www.ams.org (Reference)}

\vspace{1mm}

\href{http://www.ics.org.ru/doc?pdf=1118\&dir=r}{http://www.ics.org.ru (Russian)}

\vspace{1mm}

\href{https://www.researchgate.net/publication/259360363}{https://www.researchgate.net (Russian)}

\end{center}

\begin{abstract}
We consider a generalization of the notion of a natural mechanical system to the case of additional forces of gyroscopic type. Such forces appear, for example, as a result of global reduction of a natural system with symmetry. We study symmetries in the systems with gyroscopic forces to find out when these systems admit a global analogue of a cyclic integral. The results are applied to the problem of the motion of a rigid body about a fixed point in potential and gyroscopic forces to find the most general form of such forces admitting the global area type integral.
\end{abstract}

A natural mechanical system with a symmetry group, i.e. a group of diffeomorphisms preserving  both the kinetic energy and the potential function of the system, admits the first integral called the momentum integral \cite{Arnold} and this integral is linear and homogeneous in velocities. Here we consider the question of existence of a similar integral in the systems of a more general origin, namely, the systems with gyroscopic forces \cite{Kh1976}.

\mysec{\S 1} Let us recall the basic definition. A mechanical system with gyroscopic forces (MSGF) is a 4-tuple
\begin{equation}\label{eq01}
  (M,m,\Pi,\vk),
\end{equation}
where $M$ is a smooth manifold (configuration space), $m$ a Riemannian metric on $M$ (a scalar product in $T_x M$ smooth with respect to $x\in M$), $\Pi$ a function on $M$ (the potential energy or, shortly, the potential of the system), $\vk$ a closed 2-form on $M$ (the form of gyroscopic forces).

We denote by $\bb{,}$ the scalar product in $m$ and define the total energy $H:TM\to \bR$ of \eqref{eq01} as
\begin{equation}\label{eq02}
  H(w)=\frac{1}{2}\bb{w,w}+\Pi(p_M(w)), \qquad w\in TM.
\end{equation}
Henceforth $p_M:TM\to M$ stands for the canonical projection.

Introduce the 1-form $\tq$ on $TM$ such that for any $Y\in T_w TM$
\begin{equation}\label{eq03}
  \tq(w)Y=\bb{w,Tp_M(Y)},
\end{equation}
and the 2-form $\sg$,
\begin{equation}\label{eq04}
  \sg=\rmd \tq +p_M^*\vk.
\end{equation}

The pair $(TM,\sg)$ is a symplectic manifold \cite{Kh1976}.

We define dynamics in \eqref{eq01} by the Hamiltonian vector field $X$ on $(TM,\sg)$ generated by the function \eqref{eq02},
\begin{equation}\label{eq05}
  \rmi _X \sg=-\rmd H.
\end{equation}
The field $X$ is a second-order equation, i.e.,
\begin{equation}\label{eq06}
  Tp_M\circ X={\rm id}_{TM}.
\end{equation}

Let
\begin{equation}\label{eq07}
  \Psi=\{\psi^\tau: M\to M, \; \tau \in \bR\}
\end{equation}
be a one-parameter group of diffeomorphisms of $M$. Its action is expanded to $TM$ by tangent maps. Define the generating vector fields
\begin{eqnarray}
  v(x) &=& (d/d\tau)|_{\tau=0}\ \psi^{\tau}(x), \qquad x \in M, \label{eq08}\\
  v_T(w) &=& (d/d\tau)|_{\tau=0}\ T\psi^{\tau}(w), \quad w \in TM. \label{eq09}
\end{eqnarray}

\vskip2mm
\textbf{Definition}. \textit{A group $\Psi$ is called a symmetry group of system \eqref{eq01} if all $\psi^{\tau}$ preserve the metric $m$, the potential $V$, and the form $\vk$.}
\vskip2mm

In particular, for a symmetry group we have
\begin{eqnarray}
   & v\Pi \equiv 0, \qquad v_TH\equiv 0, \label{eq10}\\
  & L_v\vk \equiv 0. \label{eq11}
\end{eqnarray}
Here $L_v$ denotes the Lie derivative along the field \eqref{eq08}.

For $\vk\equiv 0$, conditions \eqref{eq10} define a symmetry group of the corresponding natural mechanical system $(M,m,\Pi)$. This group generates the first integral $\tl{G}:TM\to \bR$ of the dynamical system $X$ as follows (see e.g. \cite{Ta1973, Kh1976}):
\begin{equation}\label{eq12}
  \tl{G}(w)=\bb{v(x),w}, \qquad w\in T_xM.
\end{equation}
For $\vk \ne 0$, this function, of course, will not any more be constant on trajectories of system \eqref{eq01}. Let $\frac{d}{dt}$ denote the derivative of functions along the field $X$ defined by equation \eqref{eq05}.

\begin{lemma}\label{lem1}
For the function $\tl{G}$ the following equation holds
\begin{equation*}
  \frac{d\tl{G}}{dt}=\rmi _ v \vk,
\end{equation*}
where $\rmi _ v \vk$, being $1$-form on $M$, is considered as a function on $TM$ linear on fibers.  \end{lemma}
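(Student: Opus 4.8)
The plan is to recognize $\tl G$ as the contraction of the canonical form $\tq$ by the lifted field $v_T$, and then to differentiate along $X$ by means of Cartan's formula, using the defining relations \eqref{eq04}--\eqref{eq05}. The first step is the identity $\tl G=\rmi_{v_T}\tq$. Since $p_M\circ T\psi^\tau=\psi^\tau\circ p_M$, differentiating in $\tau$ at $\tau=0$ gives $Tp_M\circ v_T=v\circ p_M$, i.e. $Tp_M(v_T(w))=v(x)$ for $w\in T_xM$. Substituting $Y=v_T(w)$ into \eqref{eq03} then yields $\tq(w)v_T(w)=\bb{w,v(x)}=\tl G(w)$, so indeed $\tl G=\rmi_{v_T}\tq$.

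Next, Cartan's formula gives $d\tl G=d(\rmi_{v_T}\tq)=L_{v_T}\tq-\rmi_{v_T}d\tq$. I would then show $L_{v_T}\tq=0$: because each $\psi^\tau$ is an isometry of $m$, the lifted diffeomorphism $T\psi^\tau$ preserves $\tq$. Concretely, for $Y\in T_wTM$ one evaluates $((T\psi^\tau)^*\tq)(w)Y=\bb{T\psi^\tau w,\,T\psi^\tau(Tp_M Y)}_{\psi^\tau x}$ using the naturality relation $Tp_M\circ T(T\psi^\tau)=T\psi^\tau\circ Tp_M$; the isometry identity collapses this to $\bb{w,Tp_M Y}_x=\tq(w)Y$, so $(T\psi^\tau)^*\tq=\tq$ and hence $L_{v_T}\tq=0$. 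Therefore $d\tl G=-\rmi_{v_T}d\tq$, and by \eqref{eq04} we have $d\tq=\sg-p_M^*\vk$, giving
$$d\tl G=-\rmi_{v_T}\sg+\rmi_{v_T}p_M^*\vk.$$

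Finally I would evaluate on $X$. Writing $\frac{d\tl G}{dt}=d\tl G(X)=-\sg(v_T,X)+(p_M^*\vk)(v_T,X)$, the first term vanishes: by \eqref{eq05}, $\sg(X,v_T)=(\rmi_X\sg)(v_T)=-dH(v_T)=-v_TH=0$ thanks to \eqref{eq10}, whence $\sg(v_T,X)=0$. For the second term, at $w\in T_xM$ one has $(p_M^*\vk)(v_T,X)=\vk\bigl(Tp_M(v_T),Tp_M(X)\bigr)=\vk(v(x),w)$, where the second-order condition \eqref{eq06} supplies $Tp_M(X(w))=w$ and the relation above supplies $Tp_M(v_T(w))=v(x)$. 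This is precisely $\rmi_v\vk$ read as a fiber-linear function on $TM$, which completes the proof.

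The main obstacle is the step $L_{v_T}\tq=0$: it is the only place where the metric-preservation part of the symmetry is genuinely used, and it requires combining the naturality relation $p_M\circ T\psi^\tau=\psi^\tau\circ p_M$ with the isometry identity $\bb{T\psi^\tau a,T\psi^\tau b}_{\psi^\tau x}=\bb{a,b}_x$. Once $\tl G=\rmi_{v_T}\tq$ and the invariance of $\tq$ are in hand, the remainder is bookkeeping with \eqref{eq04}--\eqref{eq06}, and the role of the gyroscopic form is isolated cleanly in the surviving term $\rmi_{v_T}p_M^*\vk$.
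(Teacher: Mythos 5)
Your proof is correct and follows essentially the same route as the paper: the identity $\tl G=\rmi_{v_T}\tq$, Cartan's formula together with $L_{v_T}\tq=0$ to get $\rmd\tl G=-\rmi_{v_T}\rmd\tq$, then evaluation on $X$ using \eqref{eq04}--\eqref{eq06} and \eqref{eq10}. The only difference is that you prove the $\Psi$-invariance of $\tq$ inline from the isometry property, whereas the paper simply cites \cite{Kh1976} for that fact.
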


\begin{proof}
From \eqref{eq08} and \eqref{eq09},
\begin{equation*}
  Tp_M(v_T(w))=\left.\frac{d}{d\tau}\right|_{\tau=0}p_M\circ T\psi^{\tau}(w)=\left.\frac{d}{d\tau}\right|_{\tau=0}\psi^{\tau}\circ p_M(w)=v(p_M(w)).
\end{equation*}
Hence, by definition \eqref{eq03} the function $\tl{G}$ can be written in the form
\begin{equation}\label{eq13}
  \tl{G}(w)=\bb{w,v}=\bb{w,Tp_M(v_T)}=(\rmi_{ v_T}\tq)(w).
\end{equation}
Therefore,
\begin{equation}\label{eq14}
  \frac{d\tl{G}}{dt}=X\,\rmi_{ v_T}\tq = \rmd (\rmi_{ v_T}\tq)(X).
\end{equation}

The form $\tq$ is $\Psi$-invariant \cite{Kh1976}, $L_{v_T}\tq=0$. Using the known identity for Lie derivatives, inner products and exterior derivatives
\begin{equation}\label{eq15}
  L_Y=\rmd\,\rmi _Y+\rmi_Y \rmd,
\end{equation}
we find
\begin{equation}\label{eq16}
  \rmd \, (\rmi_{ v_T}\tq) = - \rmi_{ v_T} \rmd \tq.
\end{equation}
From \eqref{eq14}, \eqref{eq15}, in virtue of \eqref{eq04}, \eqref{eq05}, and \eqref{eq10},
\begin{equation*}
  \frac{d\tl{G}}{dt}=- (\rmi_{ v_T} \rmd \tq)(X)=(\rmi_X\,\rmd \tq)(v_T)=-(\rmd H+\rmi_X p_M^*\vk)(v_T)=-v_T H+\vk(v,Tp_M(X)).
\end{equation*}
Thus according to \eqref{eq06} for all $w\in T_xM$ we have
\begin{equation*}
  \frac{d\tl{G}}{dt}(w)=\vk(v(x),w),
\end{equation*}
and this is the statement.
\end{proof}

Note that locally, in the regions where $\vk$ is exact, the motions of MSGF \eqref{eq01} are described by the Lagrange equations with a Lagrange function containing terms linear in velocities. The cyclic integral of such equations is linear in velocities, but not homogeneous. This is the reason to search for an integral corresponding to the group \eqref{eq07} in the form
\begin{equation}\label{eq17}
  G(w)=\bb{v,w}+f\circ p_M,
\end{equation}
where $f:M\to \bR$ is some function.

\begin{theorem}\label{theo1}
Let a one-parameter group act on the configuration space of a mechanical system with gyroscopic forces preserving the Riemannian metric and the potential. Let $v$ be the generating field on $M$ and $\vk$ the form of gyroscopic forces. Then an integral of the type \eqref{eq17} exists if and only if the $1$-form $\rmi_v\vk$ is exact on $M$. In this case, function $f$ in \eqref{eq17} is defined by the equation
\begin{equation}\label{eq18}
  \rmi_v\vk =-\rmd f.
\end{equation}
\end{theorem}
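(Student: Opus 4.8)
The plan is to build directly on Lemma \ref{lem1} and reduce the whole question to a statement about $1$-forms on the base $M$. Writing $G=\tl G + f\circ p_M$ with $\tl G(w)=\bb{v,w}$ as in \eqref{eq12}, I would compute the derivative $dG/dt$ along the field $X$ by splitting it into its two summands. The first summand is handled immediately: Lemma \ref{lem1} gives $d\tl G/dt=\rmi_v\vk$, understood as a function on $TM$ linear on the fibers.

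The second summand, $\frac{d}{dt}(f\circ p_M)$, is where the second-order character of $X$ enters. Since $\rmd(f\circ p_M)=p_M^*\,\rmd f$, one has
\begin{equation*}
  \frac{d}{dt}(f\circ p_M)(w)=\rmd(f\circ p_M)\bigl(X(w)\bigr)=\rmd f\bigl(Tp_M(X(w))\bigr).
\end{equation*}
By the second-order property \eqref{eq06}, $Tp_M(X(w))=w$, so this equals $\rmd f(w)$, i.e. the fiberwise-linear function on $TM$ associated with the $1$-form $\rmd f$. Adding the two contributions yields
\begin{equation*}
  \frac{dG}{dt}=\rmi_v\vk+\rmd f,
\end{equation*}
once again as a function on $TM$ linear on fibers.

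It then remains to pull the condition $dG/dt\equiv 0$ back down to $M$. The correspondence sending a $1$-form on $M$ to its avatar as a fiberwise-linear function on $TM$ is injective, so $dG/dt\equiv 0$ holds precisely when $\rmi_v\vk+\rmd f=0$ as $1$-forms on $M$, that is, when $\rmi_v\vk=-\rmd f$. This settles both directions at once: if an integral of the form \eqref{eq17} exists, then $\rmi_v\vk$ coincides with the exact form $-\rmd f$ and is therefore exact; conversely, if $\rmi_v\vk$ is exact, any primitive $f$ satisfying $\rmi_v\vk=-\rmd f$ produces, via \eqref{eq17}, a genuine first integral $G$.

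I expect the only genuinely delicate point to be the bookkeeping in the middle step: keeping straight the distinction between a $1$-form on $M$ and its realization as a fiberwise-linear function on $TM$, and recognizing that the second-order condition \eqref{eq06} is exactly what converts $\rmd(f\circ p_M)$ evaluated on $X$ into $\rmd f$ read along the fiber. Everything else is a short combination of Lemma \ref{lem1} with the injectivity of this form-to-function correspondence.
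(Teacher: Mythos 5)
Your proposal is correct and follows essentially the same route as the paper: the paper's proof likewise rests on Lemma~\ref{lem1} together with the identity $\rmd f=\frac{d}{dt}(f\circ p_M)$ derived from the second-order property \eqref{eq06}, which is exactly your middle computation, and then reads off both directions from $\frac{dG}{dt}=\rmi_v\vk+\rmd f$. Your only addition is making explicit the injectivity of the correspondence between $1$-forms on $M$ and fiberwise-linear functions on $TM$, which the paper uses tacitly.
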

\begin{proof}
Again, we use the property \eqref{eq06}. For any function $f$ on $M$ and any $w\in TM$ we have
\begin{equation*}
\rmd f(w)=\rmd f (Tp_M\circ X(w))=\rmd (f\circ p_M)(X(w))=\frac{d}{dt}(f\circ p_M)(w),
\end{equation*}
so
\begin{equation}\label{eq19}
  \rmd f=\frac{d}{dt}(f\circ p_M).
\end{equation}

Suppose \eqref{eq18} holds. By Lemma~\ref{lem1} and \eqref{eq19}
\begin{equation*}
  \frac{d\tl{G}}{dt}=\rmi_{v}\vk=-\rmd f=-\frac{d}{dt}(f\circ p_M),
\end{equation*}
then
\begin{equation*}
  \frac{dG}{dt}=\frac{d\tl{G}}{dt}+\frac{d}{dt}(f\circ p_M)=0.
\end{equation*}

Conversely, let $G=\tl{G}+f\circ p_M$ be the first integral. Then
\begin{equation*}
  0=\frac{dG}{dt}=\frac{d\tl{G}}{dt}+\frac{d}{dt}(f\circ p_M)=\rmi_v\vk+\rmd f
\end{equation*}
yields \eqref{eq18}, so $\rmi_v\vk$ is exact.
\end{proof}

\textbf{Remarks}. $1^\circ$. If $\rmi_v\vk$ is exact, then automatically the form $\vk$ of gyroscopic forces is $\Psi$-invariant. Indeed, $\vk$ is closed, so from \eqref{eq15} we have
\begin{equation}\label{eq20}
  L_v\vk=\rmd\,\rmi_v \vk +\rmi_v \rmd \vk =0.
\end{equation}
The converse statement is not true. If \eqref{eq11} holds, then $\rmi_v \vk$ only has to be closed. Therefore, the set of local integrals of the type \eqref{eq17} exist, but in general case it is not possible to glue them into one global integral. In the case when the cohomology group $H^1(M,\bR)$ is trivial the global existence of an integral of the type \eqref{eq17} is equivalent to $\Psi$-invariance of the form of gyroscopic forces.

$2^\circ$. The exactness of the form $\rmi_v\vk$ does not imply the exactness of $\vk$. Therefore, an integral of the type \eqref{eq17} can exist even in the case when the system does not admit a global Lagrangian.

$3^\circ$. Suppose both $\rmi_v\vk$ and $\vk$ are exact. Does it mean that there exists a 1-form $\lambda$ such that $\rmd \lambda=\vk$ and $\lambda$ is $\Psi$-invariant? If not, then there exist globally Lagrangian systems having a global integral of the type \eqref{eq17} but not having a global $\Psi$-invariant Lagrangian (note that locally the answer is always positive). In this case Theorem~\ref{theo1} could be a generalization of Noether's theorem \cite{Arnold} for Lagrangian systems.

\mysec{\S 2} As an example, let us consider the problem of the motion of a rigid body about a fixed point. To any position of the body we, as usual, assign the matrix
\begin{equation}\label{eq21}
Q = \begin{Vmatrix} \aq_1 & \aq_2 & \aq_3\\
                    \aq'_1 & \aq'_2 & \aq'_3 \\
                    \aq''_1 & \aq''_2 & \aq''_3
                           \end{Vmatrix}
\end{equation}
the columns of which are the components of the unit vectors of the moving frame in an orthonormal frame fixed in inertial space.

The group $SO(3)$ of matrices \eqref{eq21} is identified with the submanifold in $\bR^9(\aq_1,\ldots,\aq''_3)$ defined by the relations
\begin{equation}\label{eq22}
  \aq_i\aq_j+\aq'_i\aq'_j+\aq''_i\aq''_j=\delta_{ij}, \qquad i,j=1,2,3
\end{equation}
or, equivalently,
\begin{equation}\label{eq23}
  \aq^{(i)}_1\aq^{(j)}_1+\aq^{(i)}_2\aq^{(j)}_2+\aq^{(i)}_3\aq^{(j)}_3=\delta_{ij}, \qquad i,j=0,1,2.
\end{equation}
Here $\delta_{ij}$ is the Kronecker delta.

Consider the following vector fields on $SO(3)$
\begin{equation}\label{eq24}
  \Oq_i: SO(3) \to TSO(3), \qquad i=1,2,3
\end{equation}
defined via the basic fields on $\bR^9$ as
\begin{equation}\label{eq25}
\begin{array}{l}
\ds  \Oq_1(Q)=\aq_3\frac{\prt}{\prt \aq_2}-\aq_2\frac{\prt}{\prt \aq_3}+\aq'_3\frac{\prt}{\prt \aq'_2}-\aq'_2\frac{\prt}{\prt \aq'_3}+\aq''_3\frac{\prt}{\prt \aq''_2}-\aq''_2\frac{\prt}{\prt \aq''_3},\\
\ds  \Oq_2(Q)=\aq_1\frac{\prt}{\prt \aq_3}-\aq_3\frac{\prt}{\prt \aq_1}+\aq'_1\frac{\prt}{\prt \aq'_3}-\aq'_3\frac{\prt}{\prt \aq'_1}+\aq''_1\frac{\prt}{\prt \aq''_3}-\aq''_3\frac{\prt}{\prt \aq''_1},\\
\ds  \Oq_3(Q)=\aq_2\frac{\prt}{\prt \aq_1}-\aq_1\frac{\prt}{\prt \aq_2}+\aq'_2\frac{\prt}{\prt \aq'_1}-\aq'_1\frac{\prt}{\prt \aq'_2}+\aq''_2\frac{\prt}{\prt \aq''_1}-\aq''_1\frac{\prt}{\prt \aq''_2}.
\end{array}
\end{equation}
The corresponding one-parameter groups rotate the body about the axes fixed with respect to the body and defined by the moving frame. The basis \eqref{eq24} in $TSO(3)$ is non-holonomic
\begin{equation}\label{eq26}
  [\Oq_2,\Oq_3]=\Oq_1, \qquad   [\Oq_3,\Oq_1]=\Oq_2, \qquad   [\Oq_1,\Oq_2]=\Oq_3.
\end{equation}

According to \eqref{eq22} and \eqref{eq23}, the 1-forms $\Lqq_i$ ($i=1,2,3$) on $SO(3)$ are well-defined by the following equations
\begin{equation}\label{eq27}
\begin{array}{l}
  \Lqq_1(Q)=\aq_3 \rmd\aq_2+\aq'_3 \rmd\aq'_2+\aq''_3 \rmd\aq''_2=-(\aq_2 \rmd\aq_3+\aq'_2 \rmd\aq'_3+\aq''_2 \rmd\aq''_3),\\
  \Lqq_2(Q)=\aq_1 \rmd\aq_3+\aq'_1 \rmd\aq'_3+\aq''_1 \rmd\aq''_3=-(\aq_3 \rmd\aq_1+\aq'_3 \rmd\aq'_1+\aq''_3 \rmd\aq''_1),\\
  \Lqq_3(Q)=\aq_2 \rmd\aq_1+\aq'_2 \rmd\aq'_1+\aq''_2 \rmd\aq''_1=-(\aq_1 \rmd\aq_2+\aq'_1 \rmd\aq'_2+\aq''_1 \rmd\aq''_2).
\end{array}
\end{equation}
The straightforward check shows that these forms constitute the basis in $T^*TSO(3)$ dual to \eqref{eq25}. Note one inversion of relations \eqref{eq27} useful for the future, 
\begin{equation}\label{eq28}
\begin{array}{c}
  \rmd \aq_1^{(i)} = \aq_2^{(i)} \Lqq_3- \aq_3^{(i)}\Lqq_2, \quad
  \rmd \aq_2^{(i)} = \aq_3^{(i)} \Lqq_1- \aq_1^{(i)}\Lqq_3, \quad
  \rmd \aq_3^{(i)} = \aq_1^{(i)} \Lqq_2- \aq_2^{(i)}\Lqq_1 \\
  (i=0,1,2).
\end{array}
\end{equation}

To each state of the body
\begin{equation}\label{eq29}
  \Oq_Q\in T_Q SO(3)=Q\;{\rm Ass}(3),
\end{equation}
using the canonical isomorphism $\mbf: {\rm Ass}(3) \to \bR^3$ \cite{Kh1976}, we assign the vector of the inner angular velocity
\begin{equation}\label{eq30}
  \bo =\mbf(Q^{-1}\Oq_Q).
\end{equation}
The components of this vector are the projections of the real angular velocity of the state $\Oq_Q$ to the moving axes. The corresponding map
\begin{equation}\label{eq31}
{\rm tr}:TSO(3)\to SO(3){\times}\bR^3, \qquad {\rm tr}(\Oq_Q)=(Q,\bo)
\end{equation}
is a known trivialization of $TSO(3)$ \cite{Arnold,Ta1973}.

Properties \eqref{eq26} and the duality of \eqref{eq25} and \eqref{eq27} yield the following.

\begin{lemma}\label{lem2}
The form $\Lqq_i$ assigns to infinitesimal rotation $\Oq_Q$ the $i^{\rm th}$ component of its inner angular velocity. Forms \eqref{eq27} are left invariant and satisfy
\begin{equation}\label{eq32}
  \rmd \Lqq_1=\Lqq_3 \wedge \Lqq_2, \qquad   \rmd \Lqq_2=\Lqq_1 \wedge \Lqq_3, \qquad  \rmd \Lqq_3=\Lqq_2 \wedge \Lqq_1.
\end{equation}
\end{lemma}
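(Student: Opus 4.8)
The statement bundles three assertions, and the plan is to dispatch them in increasing order of difficulty. The first is essentially a restatement of the duality between \eqref{eq25} and \eqref{eq27}: since $\Lqq_i(\Oq_j)=\delta_{ij}$, any state $\Oq_Q$ expanded as $\Oq_Q=\sum_j\oq_j\Oq_j(Q)$ satisfies $\Lqq_i(\Oq_Q)=\oq_i$. Because the one-parameter groups generated by the $\Oq_j$ rotate the body about the body-fixed axes, the coefficients $\oq_j$ are precisely the components of the inner angular velocity \eqref{eq30}, which gives the first claim.

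For left invariance I would argue through the trivialization \eqref{eq31}. Writing $\Oq_Q=QA$ with $A\in{\rm Ass}(3)$, the left translation $L_P:Q\mapsto PQ$ sends $\Oq_Q$ to $P\Oq_Q=(PQ)A$, so the body-frame representative $\mbf((PQ)^{-1}(PQ)A)=\mbf(A)$ is unchanged. Hence each component $\oq_i$ of the inner angular velocity is invariant under all left translations, and since by the first assertion $\Lqq_i$ extracts exactly this component, we get $L_P^*\Lqq_i=\Lqq_i$ for every $P$, which is left invariance.

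The structure equations \eqref{eq32} form the substantive part, and here I would recognize them as the Maurer--Cartan equations and invoke the intrinsic formula for the exterior derivative of a $1$-form, $\rmd\Lqq(Y,Z)=Y\,\Lqq(Z)-Z\,\Lqq(Y)-\Lqq([Y,Z])$, evaluated on the left-invariant frame $\Oq_1,\Oq_2,\Oq_3$. Since $\Lqq_i(\Oq_j)=\delta_{ij}$ is constant, the first two terms drop out and one is left with $\rmd\Lqq_i(\Oq_j,\Oq_k)=-\Lqq_i([\Oq_j,\Oq_k])$, which by the bracket relations \eqref{eq26} picks out a single nonzero value on each unordered pair. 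Comparing these against the corresponding wedge products evaluated on the same frame yields $\rmd\Lqq_1=\Lqq_3\wedge\Lqq_2$ and, by the cyclic symmetry of \eqref{eq26}, the other two identities. The only place demanding care is the sign bookkeeping in this step: one must track the antisymmetry of the bracket against that of the wedge so as to land on $\Lqq_3\wedge\Lqq_2$ rather than $\Lqq_2\wedge\Lqq_3$. Differentiating \eqref{eq27} directly and substituting \eqref{eq28} together with \eqref{eq23} would also work, but at the cost of considerably more computation, so I would retain it only as a cross-check.
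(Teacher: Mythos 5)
Your proposal is correct and takes essentially the same route as the paper: the paper's entire justification is the one-line remark that the commutation relations \eqref{eq26} together with the duality of \eqref{eq25} and \eqref{eq27} yield the lemma, and your computation via $\rmd\Lqq_i(\Oq_j,\Oq_k)=-\Lqq_i([\Oq_j,\Oq_k])$ is precisely how those two facts combine, with the signs correctly producing $\Lqq_3\wedge\Lqq_2$ rather than $\Lqq_2\wedge\Lqq_3$. Your duality argument for the first claim and the trivialization argument for left invariance likewise just make explicit what the paper leaves implicit.
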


Consider the second tangent bundle $TTSO(3)$. The map tangent to \eqref{eq31} takes each fiber $T_{\Oq_Q}TSO(3)$ to the product $T_Q SO(3){\times}T_\bo \bR^3$. The last factor is naturally identified with $\bR^3$. Each element $X\in TTSO(3)$ then has the form
\begin{equation}\label{eq33}
  X=((Q,\bo),(\dot Q, \bde)),
\end{equation}
where the symbol $\dot Q$ stands for any matrix satisfying $Q^{-1}\dot Q \in {\rm Ass}(3)$, and $\bde \in \bR^3$. We have
\begin{equation}\label{eq34}
\begin{array}{c}
  p_{TSO(3)}((Q,\bo),(\dot Q, \bde))=(Q,\bo), \\
  Tp_{SO(3)}((Q,\bo),(\dot Q, \bde))=(Q,\mbf(Q^{-1}\dot Q)).
\end{array}
\end{equation}

Let us lift the fields \eqref{eq25} to $TSO(3)$ by putting $\bde=0$. In $T_{\Oq_Q}TSO(3)$, we obtain the following basis
\begin{equation}\label{eq35}
  \Oq_1(Q),\;\Oq_2(Q),\;\Oq_3(Q),\;\frac{\prt}{\prt\oq_1},\; \frac{\prt}{\prt\oq_2},\; \frac{\prt}{\prt\oq_3}.
\end{equation}

Analogously, for the forms $\Lqq_i$ lifted to $T^*TSO(3)$ by the pull-back $p^*_{SO(3)}$ we keep the same notation, i.e., for $X\in T_{\Oq_Q}SO(3)$ we by definition put
\begin{equation}\label{eq36}
  \Lqq_i(Q,\bo)(X)=\Lqq_i(Q)(Tp_{SO(3)}(X)).
\end{equation}
Therefore the basis in $T^*_{\Oq_Q}TSO(3)$ dual to \eqref{eq35} is
\begin{equation}\label{eq37}
  \Lqq_1(Q), \;\Lqq_2(Q), \;\Lqq_3(Q), \; \rmd \oq_1, \;\rmd \oq_2, \;\rmd \oq_3.
\end{equation}

Using trivialization \eqref{eq31}, let us define a metric $m$ on $SO(3)$ by
\begin{equation}\label{eq38}
  m_Q(\Oq_Q^1,\Oq_Q^2)=\mbA\bo^1\cdot\bo^2,
\end{equation}
where $\mbA:\bR^3\to \bR^3$ is a symmetric operator (inertia tensor) and the central dot symbol denotes the standard scalar product in $\bR^3$.

Let $\Pi=\Pi(Q)$ be a smooth function on $SO(3)$ and
\begin{equation}\label{eq39}
  \vk =\vk_1\Lqq_1+\vk_2\Lqq_2+\vk_3\Lqq_3
\end{equation}
a closed 2-form on $SO(3)$. The representation \eqref{eq39} with $\vk_i: SO(3)\to \bR$ is possible due to \eqref{eq32}. The condition $\rmd \vk=0$ takes the compact form
\begin{equation}\label{eq40}
  \Oq_1 \vk_1+\Oq_2 \vk_2+\Oq_3 \vk_3\equiv 0.
\end{equation}

Finally, the problem of the motion of a rigid body about a fixed point is formalized as a mechanical system with gyroscopic forces
\begin{equation}\label{eq41}
  (SO(3),m,\Pi,\vk).
\end{equation}

Let us obtain the representation of the Lagrange forms \eqref{eq03} and \eqref{eq04} in the basis \eqref{eq37}. From \eqref{eq33}, \eqref{eq34}, and \eqref{eq38},
\begin{equation*}
  \tq(\Oq_Q)(X)=m_Q(\Oq_Q,\dot Q)=\mbA \bo\cdot \mbf(Q^{-1}\dot Q).
\end{equation*}
By Lemma~\ref{lem2} this yields
\begin{equation*}
  \tq(Q,\bo)=\sum_{i,j=1}^3 A_{ij}\oq_i\Lqq_j(Q)
\end{equation*}
($A_{ij}$ are the components of $\mbA$). Applying the exterior derivative, we get
\begin{equation}\label{eq42}
\rmd  \tq(Q,\bo)=\sum_{i,j=1}^3 A_{ij}( \rmd \oq_i\wedge \Lqq_j(Q)+ \oq_i\wedge \rmd \Lqq_j(Q)).
\end{equation}

Let
\begin{equation}\label{eq43}
  X(Q,\bo)=((Q,\bo),(\dot Q(Q,\bo), \bde(Q,\bo)))
\end{equation}
be the vector field defining dynamics of system \eqref{eq41}. Then
\begin{equation}\label{eq44}
  \frac{dQ}{dt}=\dot Q, \qquad \frac{d\bo}{dt}=\bde.
\end{equation}
Since $X$ is a second-order equation, the second equality \eqref{eq34} yields
\begin{equation}\label{eq45}
  \bo = \mbf(Q^{-1}\dot Q).
\end{equation}
Thus $\dot Q=Q\mbf^{-1}(\bo)$. Due to \eqref{eq21} and \eqref{eq44} we have, for the columns of the matrix $Q^T=Q^{-1}$,
\begin{equation}\label{eq46}
  \frac{d{\boldsymbol \aq}}{dt}={\boldsymbol \aq}\times \bo,\qquad
  \frac{d{\boldsymbol \aq}'}{dt}={\boldsymbol \aq}'\times \bo,\qquad
  \frac{d{\boldsymbol \aq}''}{dt}={\boldsymbol \aq}''\times \bo.
\end{equation}
These are the Poisson equations.

Let us find the terms in the general equation given by \eqref{eq05}. According to \eqref{eq02} and \eqref{eq38},
\begin{equation}\label{eq47}
  H(Q,\bo)=\frac{1}{2}\mbA \bo \cdot \bo +\Pi(Q).
\end{equation}

Let us suppose for simplicity that the moving frame is chosen to make the tensor $\mbA$ diagonal. Then from \eqref{eq28} we have
\begin{equation}\label{eq48}
\begin{array}{l}
\ds    \rmd H=A_1\oq_1\rmd \oq_1+A_2\oq_2\rmd \oq_2+A_3\oq_3\rmd \oq_3+ \sum_{i=0}^2 \left[\frac{\prt\Pi}{\prt\aq_1^{(i)}}(\aq_2^{(i)} \Lqq_3- \aq_3^{(i)}\Lqq_2) + \right. \\ 
\ds \qquad \left. +\frac{\prt\Pi}{\prt\aq_2^{(i)}}(\aq_3^{(i)} \Lqq_1- \aq_1^{(i)}\Lqq_3)+\frac{\prt\Pi}{\prt\aq_3^{(i)}}(\aq_1^{(i)} \Lqq_2- \aq_2^{(i)}\Lqq_1)\right].
\end{array}
\end{equation}
Note that by definition
\begin{equation*}
  \rmd \oq_i(Q,\bo)(X)=X\oq_i=\frac{d\oq_i}{dt},
\end{equation*}
and according to \eqref{eq45} and \eqref{eq36}
\begin{equation*}
  \Lqq_i(Q,\bo)(X)=\oq_i.
\end{equation*}
Therefore from \eqref{eq39} and \eqref{eq42} we find
\begin{equation}\label{eq49}
\begin{array}{rcl}
  \rmi _X \rmd \tq & = & \ds \Bigl[A_1\frac{d\oq_1}{dt}+(A_3-A_2)\oq_2\oq_3\Bigr]\Lqq_1+ \Bigl[A_2\frac{d\oq_2}{dt}+(A_1-A_3)\oq_3\oq_1\Bigr]\Lqq_2+\\[3mm]
  {}& + & \ds \Bigl[A_3\frac{d\oq_3}{dt}+(A_2-A_1)\oq_1\oq_2\Bigr]\Lqq_3-\\[3mm]
  {} & - & (A_1\oq_1\rmd \oq_1+A_2\oq_2\rmd \oq_2+A_3\oq_3\rmd \oq_3),\\[3mm]
  \rmi_X p^*_ {SO(3)}\vk & = & (\vk_3\oq_2-\vk_2\oq_3)\Lqq_1+(\vk_1\oq_3-\vk_3\oq_1)\Lqq_2+(\vk_1\oq_3-\vk_3\oq_1)\Lqq_3.
\end{array}
\end{equation}

Let us substitute \eqref{eq48} and \eqref{eq49} into \eqref{eq04} and \eqref{eq05}. Identifying the coefficients of the independent forms $\Lqq_i$, we come to the Euler equations
\begin{equation}\label{eq50}
\begin{array}{l}
  \ds A_1\frac{d\oq_1}{dt}+(A_3-A_2)\oq_2\oq_3+\vk_3\oq_2-\vk_2\oq_3=\sum_{i=0}^2\Bigl(\aq_2^{(i)}\frac{\prt \Pi}{\prt \aq_3^{(i)}}-\aq_3^{(i)}\frac{\prt \Pi}{\prt \aq_2^{(i)}}\Bigr),\\
  \ds A_2\frac{d\oq_2}{dt}+(A_1-A_3)\oq_3\oq_1+\vk_1\oq_3-\vk_3\oq_1=\sum_{i=0}^2\Bigl(\aq_3^{(i)}\frac{\prt \Pi}{\prt \aq_1^{(i)}}-\aq_1^{(i)}\frac{\prt \Pi}{\prt \aq_3^{(i)}}\Bigr),\\
  \ds A_3\frac{d\oq_3}{dt}+(A_2-A_1)\oq_1\oq_2+\vk_2\oq_1-\vk_1\oq_2=\sum_{i=0}^2\Bigl(\aq_1^{(i)}\frac{\prt \Pi}{\prt \aq_2^{(i)}}-\aq_2^{(i)}\frac{\prt \Pi}{\prt \aq_1^{(i)}}\Bigr).
\end{array}  
\end{equation}
This system is closed by equations \eqref{eq46}. We emphasize that $\vk_i$ are arbitrary functions on $SO(3)$ with the only condition \eqref{eq40}. Thus we obtained the most general equations of the type \eqref{eq05} describing the motion of a rigid body in potential and gyroscopic force fields.

\mysec{\S 3} Suppose system \eqref{eq41} has a symmetry group $\Psi$ which rotates the body around the first axis fixed in space. The generating field for such group is
\begin{equation}\label{eq51}
  v(Q)=Q \mbf^{-1}({\boldsymbol \aq}),
\end{equation}
and the quotient map $p:Q\mapsto {\boldsymbol \aq}$ takes $SO(3)$ to the Poisson sphere 
\begin{equation*}
  \aq_1^2+\aq_2^2+\aq_3^2=1.
\end{equation*}
So if $F=F(Q)$ is a $\Psi$-invariant function on $SO(3)$, then in the variables $\aq_i^{(j)}$ (with $i=1,2,3$ and $j=0,1,2$) it can be written in the form
\begin{equation}\label{eq52}
  F=F(\aq_1,\aq_2,\aq_3).
\end{equation}
In particular this is the condition for the potential $\Pi$:
\begin{equation}\label{eq53}
    \Pi=\Pi(\aq_1,\aq_2,\aq_3).
\end{equation}

By Theorem~\ref{theo1}, the considered system has an integral of the type \eqref{eq17} (it can be naturally called the \textbf{area integral}) if and only if the field \eqref{eq51} and the 1-form \eqref{eq39} satisfy \eqref{eq18} with some function $f$ on $SO(3)$. In fact, \eqref{eq18} implies $f$ is $\Psi$-invariant. Indeed, $vf=\rmd f(v)=-\rmi_v\rmi_v\vk\equiv 0$. Therefore,
\begin{equation}\label{eq54}
    f=f(\aq_1,\aq_2,\aq_3).
\end{equation}
Using Lemma~\ref{lem2}, expressions \eqref{eq28}, and equations \eqref{eq51} and \eqref{eq54}, we find
\begin{equation}\label{eq55}
\begin{array}{rcl}
  \rmi_v \vk & = & (\vk_3\aq_2-\vk_2\aq_3)\Lqq_1+(\vk_1\aq_3-\vk_3\aq_1)\Lqq_2+(\vk_2\aq_3-\vk_3\aq_2)\Lqq_3, \\[2mm]
\ds  -\rmd f & = & \ds \Bigl(\frac{\prt f}{\prt \aq_3}\aq_2-\frac{\prt f}{\prt \aq_2}\aq_3   \Bigr)\Lqq_1+ \Bigl(\frac{\prt f}{\prt \aq_1}\aq_3-\frac{\prt f}{\prt \aq_3}\aq_1   \Bigr)\Lqq_2+\\[3mm]
{}&+ & \ds  \Bigl(\frac{\prt f}{\prt \aq_2}\aq_1-\frac{\prt f}{\prt \aq_1}\aq_2   \Bigr)\Lqq_3.
\end{array}
\end{equation}

Denote
\begin{equation}\label{eq56}
  {\boldsymbol \kappa}=\left( \begin{array}{c}\vk_1\\ \vk_2\\ \vk_3\end{array}\right), \quad   \grad f=\left( \begin{array}{c}\prt f / \prt \aq_1\\ \prt f / \prt \aq_2\\ \prt f / \prt \aq_3\end{array}\right) \in \bR^3.
\end{equation}

Substitute \eqref{eq55} and \eqref{eq56} in \eqref{eq18} to obtain
\begin{equation}\label{eq57}
  ({\boldsymbol \kappa}-\grad f)\times {\boldsymbol \aq}\equiv 0.
\end{equation}

Using the fact that the forms $\Lqq_i$ are left invariant and, in particular, $L_v \Lqq_i \equiv 0$, we get from \eqref{eq39},
\begin{equation}\label{eq58}
  L_v \vk=(v \vk_1)\Lqq_1+(v \vk_2)\Lqq_2+(v \vk_3)\Lqq_3.
\end{equation}
Comparing \eqref{eq20} with \eqref{eq58} we see that $v\vk_i\equiv 0$, i.e.,
\begin{equation}\label{eq59}
      \vk_i =\vk_i (\aq_1,\aq_2,\aq_3), \qquad i=1,2,3.
\end{equation}
From \eqref{eq57} and \eqref{eq59} we get the following result.
\begin{theorem}\label{theo2}
In system \eqref{eq41} with the potential of the type \eqref{eq53}, the area integral exists  if and only if the vector composed by the coefficients of the form of gyroscopic forces has a representation
\begin{equation}\label{eq60}
{\boldsymbol \kappa}=F {\boldsymbol \aq}+\grad f,
\end{equation}
where $F$ and $f$ are functions of the type \eqref{eq52} and \eqref{eq54}.
\end{theorem}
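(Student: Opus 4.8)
The plan is to reduce the statement to the pointwise vector identity already obtained and to invoke Theorem~\ref{theo1}. By Theorem~\ref{theo1}, the area integral of the form \eqref{eq17} exists if and only if $\rmi_v\vk$ is exact, $\rmi_v\vk=-\rmd f$, i.e.\ \eqref{eq18} holds for some $f$ on $SO(3)$. I would first record that such an $f$ is automatically $\Psi$-invariant, since $vf=\rmd f(v)=-\rmi_v\rmi_v\vk\equiv 0$, so $f$ has the form \eqref{eq54}; and that, by Remark~$1^\circ$, exactness of $\rmi_v\vk$ forces $\Psi$-invariance of $\vk$, whence comparison of \eqref{eq20} with \eqref{eq58} gives $v\vk_i\equiv 0$, i.e.\ the coefficients are of the type \eqref{eq59}. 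With both $f$ and the $\vk_i$ depending only on $\aq_1,\aq_2,\aq_3$, the explicit expansions \eqref{eq55} of $\rmi_v\vk$ and $-\rmd f$ in the left-invariant coframe $\Lqq_i$ turn the identity \eqref{eq18} into the single algebraic relation \eqref{eq57}, namely $({\boldsymbol \kappa}-\grad f)\times{\boldsymbol \aq}\equiv 0$.

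The heart of the argument is then the elementary observation that a vector in $\bR^3$ is annihilated by the cross product with ${\boldsymbol \aq}$ precisely when it is collinear with ${\boldsymbol \aq}$. Since the base point lies on the Poisson sphere $\aq_1^2+\aq_2^2+\aq_3^2=1$, the vector ${\boldsymbol \aq}$ never vanishes, so \eqref{eq57} is equivalent to the existence of a scalar $F$ with ${\boldsymbol \kappa}-\grad f=F{\boldsymbol \aq}$; contracting with ${\boldsymbol \aq}$ and using $|{\boldsymbol \aq}|^2=1$ gives the explicit formula $F=({\boldsymbol \kappa}-\grad f)\cdot{\boldsymbol \aq}$. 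For the ``only if'' direction I would start from \eqref{eq57}, extract this $F$, and conclude the representation \eqref{eq60}. For the ``if'' direction I would substitute \eqref{eq60} into $({\boldsymbol \kappa}-\grad f)\times{\boldsymbol \aq}$, use $F\,{\boldsymbol \aq}\times{\boldsymbol \aq}=0$ to recover \eqref{eq57}, read it back through \eqref{eq55} as the exactness \eqref{eq18}, and invoke Theorem~\ref{theo1} to produce the integral with this same $f$.

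It remains to verify that $F$ is of the admissible type \eqref{eq52} and that this assignment is globally well defined, which I regard as the only genuinely delicate point. From the formula $F=({\boldsymbol \kappa}-\grad f)\cdot{\boldsymbol \aq}$ the factors ${\boldsymbol \kappa}$ (by \eqref{eq59}), $\grad f$ (by \eqref{eq54}), and ${\boldsymbol \aq}$ all depend only on $\aq_1,\aq_2,\aq_3$, so $F$ does too; and because ${\boldsymbol \aq}$ never vanishes on the Poisson sphere, this closed-form expression is manifestly smooth, so the scalar extracted fiberwise from the collinearity \eqref{eq57} patches into a single smooth function rather than merely a locally defined one. Conversely, in the ``if'' direction the prescribed $F$ and $f$ of type \eqref{eq52}, \eqref{eq54} already make ${\boldsymbol \kappa}=F{\boldsymbol \aq}+\grad f$ a function of $\aq_1,\aq_2,\aq_3$ alone, so the computation \eqref{eq55} applies verbatim and no compatibility beyond the standing closedness \eqref{eq40} of $\vk$ in \eqref{eq41} needs checking.
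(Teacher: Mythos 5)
Your proposal is correct and follows essentially the same route as the paper: reduce to the exactness condition \eqref{eq18} via Theorem~\ref{theo1}, deduce the $\Psi$-invariance \eqref{eq54} of $f$ and \eqref{eq59} of the $\vk_i$, pass to the algebraic identity \eqref{eq57}, and read off the representation \eqref{eq60}. The only difference is that you explicitly carry out the final step (extracting $F=({\boldsymbol \kappa}-\grad f)\cdot{\boldsymbol \aq}$ from the collinearity and checking it is smooth and of type \eqref{eq52}), which the paper leaves implicit in the phrase ``From \eqref{eq57} and \eqref{eq59} we get the following result.''
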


If the area integral exists, then from \eqref{eq17}, \eqref{eq38}, and \eqref{eq51} we find its form
\begin{equation}\label{eq61}
  G(Q,\bo)=A_1\oq_1 \aq_1+A_2\oq_2 \aq_2+A_3\oq_3 \aq_3+f(\aq_1,\aq_2,\aq_3).
\end{equation}

In the partial case when $\vk_i \equiv {\rm const}$ $(i=1,2,3)$ let us put $f=\vk_1\aq_1+\vk_2\aq_2+\vk_3\aq_3$ and ${F=0}$. The area integral takes the classical form
\begin{equation*}
   G=(A_1\aq_1+\vk_1)\oq_1+(A_2\aq_2+\vk_2)\oq_2+(A_3\aq_3+\vk_3)\oq_3
\end{equation*}
of the area integral for the motion of a gyrostat in a field with potential of the type \eqref{eq53}.

Let us emphasize the following simple, but important fact. Let the condition \eqref{eq60} hold. Then the vector field \eqref{eq43}, being a Hamiltonian system on the symplectic manifold $(TSO(3),\sg)$, has two first integrals \eqref{eq47} and \eqref{eq61} and, obviously, their Poisson bracket is zero: $\{H,G\}=XG\equiv 0$. By Liouville's theorem \cite{Arnold}, to solve this problem in quadratures, it is sufficient to point out one more integral independent of $H$ and $G$ and this integral should be in involution with $G$. The following criterion holds.
\begin{theorem}\label{theo3}
A function $K:TSO(3) \to \bR$ is in involution with the area integral if and only if $K$ is preserved by the maps tangent to diffeomorphisms of the group $\Psi$.  
\end{theorem}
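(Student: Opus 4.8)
The plan is to identify the Hamiltonian vector field generated by the area integral $G$ with the infinitesimal generator $v_T$ of the lifted action of $\Psi$ on $TSO(3)$, and then to read off the statement from the elementary fact that a function is annihilated by a vector field exactly when it is constant along that field's flow.

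First I would compute the Hamiltonian field $X_G$ determined by $\rmi_{X_G}\sg=-\rmd G$. Writing $G=\tl G+f\circ p_M$ and $\sg=\rmd\tq+p_M^*\vk$, I evaluate $\rmi_{v_T}\sg$ directly. The invariance $L_{v_T}\tq=0$ together with $\rmi_{v_T}\tq=\tl G$ from \eqref{eq13} gives $\rmi_{v_T}\rmd\tq=-\rmd\tl G$. For the second term, using $Tp_M(v_T)=v$ one checks for any $Y$ that $(\rmi_{v_T}p_M^*\vk)(Y)=\vk(v,Tp_M Y)=(p_M^*(\rmi_v\vk))(Y)$, so $\rmi_{v_T}p_M^*\vk=p_M^*(\rmi_v\vk)$; by the defining relation \eqref{eq18} this equals $-p_M^*\rmd f=-\rmd(f\circ p_M)$. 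Adding the two pieces yields $\rmi_{v_T}\sg=-\rmd(\tl G+f\circ p_M)=-\rmd G$, hence $X_G=v_T$.

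With this identification the rest is routine. In the sign convention of the paper ($\{H,G\}=XG$ for the Hamiltonian field $X$ of $H$), antisymmetry gives $\{K,G\}=-\{G,K\}=-X_G K=-v_T K$, so $K$ is in involution with $G$ precisely when $v_T K\equiv 0$. Since $v_T$ is by definition the velocity field of the one-parameter group $\{T\psi^\tau\}$, and this group is exactly the flow of $v_T$ (because $T\psi^{\tau_1}\circ T\psi^{\tau_2}=T\psi^{\tau_1+\tau_2}$), the condition $v_T K\equiv 0$ is equivalent to $K\circ T\psi^\tau=K$ for every $\tau$, i.e. to $K$ being preserved by the maps tangent to the diffeomorphisms of $\Psi$. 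This is the assertion; completeness of the flow causes no trouble since $SO(3)$ is compact.

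The main obstacle is the first step, the verification that $X_G=v_T$, and within it the identity $\rmi_{v_T}p_M^*\vk=p_M^*(\rmi_v\vk)$: this is where the closedness of $\vk$ (which makes $\sg$ symplectic) and the hypothesis \eqref{eq18} enter in an essential way. Everything afterward is the standard correspondence between a momentum-type integral, its Hamiltonian flow, and the generating symmetry.
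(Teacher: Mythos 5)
Your proof is correct and takes essentially the same route as the paper: establish $\rmi_{v_T}\sg=-\rmd G$ (so the Hamiltonian field of $G$ is $v_T$), then reduce involution with $G$ to $v_T K\equiv 0$, i.e.\ invariance of $K$ under the tangent lifts $T\psi^\tau$. You simply make explicit two steps the paper leaves implicit --- the naturality identity $\rmi_{v_T}p_M^*\vk=p_M^*(\rmi_v\vk)$ and the flow argument at the end --- and the sign difference in your bracket convention ($\{K,G\}=-v_T K$ versus the paper's $v_T K$) is immaterial since only vanishing is at issue.
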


This statement is true for the general case. From \eqref{eq12}, \eqref{eq13}, and \eqref{eq16}\,--\,\eqref{eq18} we have 
\begin{equation*}
  \rmd G= -\rmi_{v_T}\rmd \tq - p_M^*\rmi_v \vk=-\rmi_{v_T}\sg,
\end{equation*}
i.e., the Hamiltonian field generated by the Hamilton function $G$ is $v_T$. Then from the definition of the Poisson bracket we have $\{K,G\}=v_T K$. This proves the theorem.

\vskip2mm

Returning to the rigid body problem we obtain that the needed additional integral must depend only on the variables $\oq_1,\oq_2,\oq_3,\aq_1,\aq_2,\aq_3$.

\end{document}